\documentclass[letterpaper,12pt,titlepage,oneside,final]{article}

\usepackage{amsmath,amssymb,amstext} 
\usepackage[pdftex]{graphicx} 
\usepackage[pdftex,pagebackref=false]{hyperref} 
\hypersetup{
    plainpages=false,       % needed if Roman numbers in frontpages
    unicode=false,          % non-Latin characters in Acrobat’s bookmarks
    pdftoolbar=true,        % show Acrobat’s toolbar?
    pdfmenubar=true,        % show Acrobat’s menu?
    pdffitwindow=false,     % window fit to page when opened
    pdfstartview={FitH},    % fits the width of the page to 
    pdfnewwindow=true,      % links in new window
    colorlinks=true,        % false: boxed links; true: colored links
    linkcolor=blue,         % color of internal links
    citecolor=green,        % color of links to bibliography
    filecolor=magenta,      % color of file links
    urlcolor=cyan           % color of external links
}

\let\origdoublepage\cleardoublepage
\newcommand{\clearemptydoublepage}{
  \clearpage{\pagestyle{empty}\origdoublepage}}
\let\cleardoublepage\clearemptydoublepage

\newcommand{\iprod}{\mathbin{\lrcorner}}

\newcommand{\vol}{\operatorname{vol}}
\DeclareMathOperator{\Tr}{Tr}
\usepackage{xcolor}
\usepackage{epsf}
\usepackage{amsmath,amscd}
\usepackage{amsmath, latexsym, amsfonts, amssymb, amsopn, amscd, mathtools, esvect, bigints, graphicx, float, indentfirst, inputenc, amssymb, pdfpages, multirow, multicol, physics, url,alltt, mathrsfs, amsthm, tikz-cd,tikz, pgfplots}
\pgfplotsset{compat=1.18}
\usepackage{import, xifthen, transparent}
\usepackage{etoolbox, adjustbox}
\usepackage{stmaryrd}
\usepackage{subfiles}
\usepackage{enumerate}
\usepackage[dvipsnames]{xcolor}
\usepackage{color,soul}
\renewcommand{\l}{\left(}
\renewcommand{\r}{\right)}

\newtheorem{theorem}{Theorem}[section]

\newtheorem{lemma}[theorem]{Lemma}
\newtheorem{Def}[theorem]{Definition}
\newtheorem{conjecture}[theorem]{Conjecture}
\newtheorem{remark}[theorem]{Remark}

\begin{document}

\pagestyle{plain}
\pagenumbering{arabic}
\setcounter{page}{1}

% --- CUSTOM TITLE & AUTHOR ---
\begin{center}
    \Huge
    {\bf The hypersymplectic flow descended from the $G_2$-Laplacian coflow} \\
    
    \vspace*{1.0cm}
    \Large
    Amanda Maria Petcu \\
    \vspace*{0.5 cm}
    \large
   April 20 2026 \\
    \vspace*{1.5cm}
\end{center}

% --- ABSTRACT ---

\begin{center}\textbf{Abstract}\end{center}

A conjecture of Simon Donaldson is that on a compact $4$-manifold $X^4$ one can flow from a hypersymplectic structure to a hyperk\"ahler structure while remaining in the same cohomology class. To this end the hypersymplectic flow was introduced by Fine--Yao. In this paper the notion of a positive triple on $X^4$ is used to describe a hypersymplectic and hyperk\"ahler structure. Given a closed positive triple one can define either a closed $G_2$ structure or a coclosed $G_2$ structure on $\mathbb{T}^3 \times X^4$. The coclosed $G_2$ structure is evolved under the modified $G_2$-Laplacian coflow. The coflow descends to a flow of the positive triple on $X^4$, which is again the Fine--Yao hypersymplectic flow. 
\vspace*{1.0cm}

% --- INTRODUCTION ---

%======================================================================

%======================================================================
\section{Introduction}

The notion of a hyperk\"ahler manifold  first arose in Berger's classification of possible holonomy groups in his paper \cite{MR79806}, they are known in the literature to be a class of manifolds with special holonomy. Restricting to dimension $4$, it was  Donaldson who posed the idea of loosening the conditions of a hyperk\"ahler structure to one where the triple of K\"ahler forms was only symplectic.  This is what we call in this paper a hypersymplectic structure. In \cite{donaldson2006twoformsfourmanifoldsellipticequations}, Donaldson conjectured that up to isotopy the only example of a hypersympletic manifold is a hyperk\"ahler manifold.  We state the conjecture below and refer the reader to \cite{fine2020reporthypersymplecticflow} for more details on the conjecture and how it arose in the literature.
\begin{conjecture}{(Donaldson's Conjecture \cite{donaldson2006twoformsfourmanifoldsellipticequations})}
Let $M$ be a compact oriented $4$-manifold. Let $\underline{\omega}$ be a hypersymplectic structure on $M$. There exists a one-parameter family of diffeomorphisms $F_t: M \to M$ connected to the identity, that take a hypersymplectic structure $\underline{\omega}(0) = \underline{\omega}$ to a hyperk\"ahler structure $\underline{\omega}(1) = F_1^* \underline{\omega} $, so that for $0 \leq t \leq 1$, $\underline{\omega}(t) = F_t^*\underline{\omega}$ remains in the same cohomology class. 
\end{conjecture}

In a particular setting, Fine and Yao have done work to prove this conjecture. In \cite{fine_2018} the authors define a closed $G_2$ structure on $M^7 =\mathbb{T}^3 \times X^4 $ where $X^4$ is compact and hypersymplectic.
Let $t^i$ be the flat coordinates on $\mathbb{T}^3$ and let $\underline{\omega}$ be a hypersymplectic structure on $X^4$. The  $3$-form $\varphi$ on $M^7$  is given by 
\begin{equation}\label{G2 form}
    \varphi = dt^{123} - dt^1 \wedge \omega_1 - dt^2 \wedge \omega_2 - dt^3 \wedge \omega_3.
\end{equation}
This form gives a closed $G_2$ structure on $M^7$ that is invariant under $\mathbb{T}^3$, see \cite{fine_2018}.
 Using the particular $\varphi$ from \eqref{G2 form}, in \cite{fine_2018} the authors computed that the metric $g_{\varphi}$, given by 
\begin{equation}
    g_{\varphi}(a,b) \mu_{\varphi} = \frac{1}{6}\l a \iprod \varphi \r \wedge \l b \iprod \varphi \r \wedge \varphi
\end{equation}
is a warped product of the metric on $X^4$ given by the hypersymplectic structure and the Euclidean metric on $\mathbb{T}^3$.  It is given as 
\begin{equation}\label{G2 metric}
    g_{\varphi} =  Q_{ij} dt^i \otimes dt^j + g_{\underline{\omega}},
\end{equation}
where $g_{\underline{\omega}}$ is a metric on $X^4$ coming from the hypersymplectic triple $\underline{\omega}$. It is defined in \eqref{metric}. They evolve the $G_2$ structure under the $G_2$-Laplacian flow to create a new geometric flow on $X^4$ that could deform the hypersymplectic structure to a hyperk\"ahler structure. This flow is known as the hypersymplectic flow and is given by 
\begin{equation}\label{Hypersymplectic flow General}
    \partial_t \underline{\omega} = d\l Q d^* \l Q^{-1} \underline{\omega}\r\r.
\end{equation}
Letting $Q^{ij}$ be the entries of $Q^{-1}$, we can write the hypersymplectic flow more explicitly on each form as 
 \begin{equation}\label{Hypersymplectic flow component}
     \frac{\partial \omega_i}{\partial t} = d \l Q_{ik} d^* \l Q^{kl} \omega_l \r \r.
 \end{equation}
Furthermore, in \cite{fine_2018} the authors show that any solution of the $G_2$-Laplacian flow that begins with initial condition \eqref{G2 form} remains $\mathbb{T}^3$ invariant for as long as it exists. Thus any solution of the $G_2$-Laplacian flow of the form \eqref{G2 form} remains of this form along the flow. In \cite{fine2020reporthypersymplecticflow} the authors show that when $Q$ is constant, $\underline{\omega}$ is a fixed point of the flow. In such cases there exists a hyperk\"ahler structure whose metric is equal (up to a positive constant) to the hypersymplectic metric. The hypersymplectic flow $\eqref{Hypersymplectic flow General}$ descended from the $G_2$-Laplacian flow therefore one can say that the flows are equivalent in this particular set up. Thus the hypersymplectic flow inherits the properties of short-time existence and uniqueness from the $G_2$-Laplacian flow. Furthermore, if the initial $G_2$ structure $\varphi_0$ is closed then under the $G_2$-Laplacian flow, the structure $\varphi(t)$ remains closed and belongs to the same cohomology class as $\varphi_0$ for all time $t$, see \cite{bryant2011laplacianflowclosedg2structures}, \cite{karigiannis2025bridgeslecturesflowsgeometric}. Thus the same can be said about the hypersymplectic flow. If $\underline{\omega}_0$ is the initial hypersymplectic structure then as we flow using $\eqref{Hypersymplectic flow General}$ $\underline{\omega}(t)$ remains closed and cohomologous to $\underline{\omega}_0$ for as long as it exists. In \cite{fine_2018} the authors are able to show long time existence of the flow provided the torsion tensor of the $G_2$ structure stays uniformly bounded as we deform the hypersymplectic structures. In \cite{fine2024convergencehypersymplecticflowt4} the authors prove Donaldson's conjecture explicitly on $\mathbb{T}^3 \times \mathbb{T}^4$, under the assumption that the hypersymplectic structure on $\mathbb{T}^4$ is $\mathbb{T}^3$-invariant for some $\mathbb{T}^3 \subset \mathbb{T}^4$. We refer the reader to \cite{fine_2018}, \cite{fine2020reporthypersymplecticflow} and \cite{fine2024convergencehypersymplecticflowt4} for more details. Presently \cite{fine_2018} and \cite{fine2024convergencehypersymplecticflowt4} are the only known convergence results for the hypersymplectic flow. 
\\
\\
In this paper we construct a coclosed $G_2$ structure on $\mathbb{T}^3 \times X^4$ where $X^4$ is compact and hypersymplectic. We flow this structure under the modified $G_2$-Laplacian coflow. To this end we introduce some background for the modified Laplacian coflow and refer the reader to \cite{grigorian2018flowscoclosedg2structures} and \cite{Grigorian_2013} for additional details. One can define a $G_2$ structure and its corresponding Riemannian metric using a positive $4$-form $\psi$ (up to a choice of orientation). So instead of flowing the $3$-form $\varphi$ in time, one might choose to flow the $4$-form in time, thus creating a different type of $G_2$ geometric flow. Using this idea, the Laplacian coflow was first introduced in  \cite{Karigiannis_2012}. This flow is given by 
 \begin{equation}\label{Laplacian Coflow}
     \frac{\partial \psi }{\partial t} = dd^* \psi
 \end{equation}
 on the cohomology class $[\psi] = \{\psi_0 + d\eta\}$ with initial conditions $\psi(0) = \psi_0$ and $d\psi_0 = 0$. This flow shares some properties with the Laplacian flow. In particular the solutions stay coclosed and remain in the same cohomology class for their entire existence, see \cite{Grigorian_2013}, \cite{grigorian2018flowscoclosedg2structures}. However the main difference between the flows is that it is still not known if the Laplacian coflow has short time existence. In \cite{bryant2011laplacianflowclosedg2structures}, the Laplacian flow starting from closed $G_2$ structures is weakly parabolic in the direction of closed $3$-forms and thus an application of DeTurk's trick transforms this flow to a strictly parabolic flow which then has short-time existence. However the same cannot be done for the Laplacian coflow. This flow is not weakly parabolic in the direction of closed $4$-forms and thus cannot be shown to be equivalent to some parabolic flow, see \cite{grigorian2018flowscoclosedg2structures}.  In the hopes of finding coclosed $G_2$-structures that can be deformed to torsion-free $G_2$ structures, Grigorian introduced the modified Laplacian coflow in \cite{Grigorian_2013}. This flow is given by 
 \begin{equation}\label{modified coflow}
     \frac{\partial \psi }{\partial t} = dd^* \psi + 2d\l \l A- \Tr \mathrm{T}_{\psi} \r *_{\psi}\psi\r
 \end{equation}
where $\Tr \mathrm{T}_{\psi}$ is the trace of the full torsion tensor $\mathrm{T}_{\psi}$ of the $G_2$-structure defined by $\psi$ and $A$ is a fixed constant. We note that we can pick $A=0$ and this is exactly what we do in Section \ref{Chaper Laplacian Co-flow}. It can be seen that when $A=0$ and $\Tr \mathrm{T}_{\psi} = 0$ the coflow and modified coflow agree. In \cite{Grigorian_2013}, when starting from a coclosed $G_2$-structure the author shows that the modified coflow \eqref{modified coflow} is weakly parabolic in the direction of closed forms and hence an application of DeTurk's trick can be used to show that the flow has short time existence.
The author also uses the Nash-Moser inverse function theorem to show that the modified coflow also has uniqueness. These are the same techniques used by Bryant-Xu for the Laplacian flow in \cite{bryant2011laplacianflowclosedg2structures}. We summarize this to say that the following initial value problem 
\begin{equation}\label{modified coflow IVP}
    \begin{split}
        \frac{\partial \psi }{\partial t} &= dd^* \psi + 2d\l \l A- \Tr \mathrm{T}_{\psi} \r *_{\psi}\psi\r, \\
        \psi (0) &= \psi_0, \\
        d \psi_0 &= 0,
    \end{split}
\end{equation}
on the space $\{\psi_0 + d \eta\}$ has short-time existence and uniqueness. By construction any solution $\psi(t)$ remains in the same cohomology class $[\psi_0]$ for as long as it exists, and thus any solution stays coclosed for all time, see \cite{Grigorian_2013}. 
\\
\\
Following analogously the ideas of Fine and Yao, we construct a coclosed $G_2$ structure on $\mathbb{T}^3 \times X^4$ and show that this $G_2$ structure is invariant under $\mathbb{T}^3$. We evolve this $G_2$ structure under the modified $G_2$-Laplacian coflow and find that its solutions remain $\mathbb{T}^3$ invariant for as long as the flow exists. Furthermore, we find that for this particular ansatz the modified Laplacian coflow descends to a flow of the hypersymplectic structure on ${X}^4$, which is the same flow that Fine-Yao discovered known as the hypersymplectic flow.

\noindent In Section \ref{Section Geometric Structures}, we begin by giving some background. We introduce a positive triple of $2$-forms on a $4$-manifold, which we use to describe a hypersymplectic structure and a hyperk\"ahler structure. In Section \ref{Chaper Laplacian Co-flow}, the main results of this paper are given. In Section \ref{Dual positive triple and a coclosed $G_2$ structure}, we explain how given a hypersymplectic triple we can either define a closed $G_2$ structure or a coclosed $G_2$ structure. We construct a coclosed $G_2$ structure on $\mathbb{T}^3 \times X^4$ where $X^4$ is compact. In Section \ref{The hypersymplectic flow from the Laplacian coflow}, we show that this $G_2$ structure is invariant under $\mathbb{T}^3$. We also show that in this setting the $G_2$-Laplacian coflow and modified coflow agree. We flow this structure under the $G_2$-Laplacian coflow and find that it descends to the same flow of hypersymplectic structures found by Fine and Yao in \cite{fine_2018}. 
\\
\\
\textbf{{Notations and conventions:}}
We give a brief explanations of the notation and conventions that are used regularly throughout this paper. The Einstein convention is used. A multi-index in an exponent is used to mean the wedge product. For example, given three $1$-form $e^i, e^j, e^k$, 
    \[e^{ijk}=e^i \wedge e^j \wedge e^k.\] 
Let $dt^j$ be $1$-forms on $\mathbb{T}^3$. The notation $\hat{dt^j}$ is used to express the $2$-form on $\mathbb{T}^3$ that omits $dt^j$. That is 
    \[\hat{dt^j} = dt^i \wedge dt^k\]
    where $i,j,k$ are cyclically permuted $i \to j \to k$. The notation $\varepsilon_{ijk}$ is the permutation symbol on $\{1,2,3\}$.  
\\
\\
\textbf{Acknowledgment:} The results of this paper comprise a portion of the author's doctoral thesis in the Department of Pure Mathematics at the University of Waterloo in 2026.

\section{Geometric structures} \label{Section Geometric Structures}
%----------------------------------------------------------------------
In this section we explain the different structures needed to understand the results of this paper. For this section we let $X^4$ be any oriented manifold of dimension four. We fix a volume form $\mu$ on $X^4$. 
\begin{Def}\label{positivetriple}
    Let $X^4$ be an oriented manifold of dimension four, let $\mu$ be a non-vanishing top form on $X^4$. Let $\underline{\omega} = \l \omega_1,\omega_2,\omega_3\r$ be a triple of $2$-forms on $X^4$. Let $Q$ be the symmetric $3 \times 3$ matrix valued function such that
    \begin{equation}\label{eq:B1}
        \omega_i \wedge \omega_j = 2  Q_{ij}  \mu.
    \end{equation}
    We say the triple $\underline{\omega}$ is a positive triple if $Q$ is positive definite at all points. 
\end{Def}
\begin{remark}
    When $Q$ is positive definite, the $2$-forms $\omega_i$ are non-degenerate as
\[\omega_i^2 = 2 Q_{ii}   \mu \neq 0\]
  which is non-vanishing since $Q_{ii} >0 $ by positive definiteness.
\end{remark}
 \noindent We also note that the matrix $Q$ depends on $\underline{\omega}$ but also on $\mu$. One can see that as $\mu$ changes, $Q$ also changes. There is a particular choice of volume form $\mu$ so that $\det(Q) = 1$, this detail is explained in lemma \ref{relation between volume form and reference form}. This is the volume form that is chosen in \cite{fine_2018}. Furthermore, the positive triple $\underline{\omega}$ gives rise to a Riemannian metric $g_{\underline{\omega}}$ defined by 
\begin{equation}\label{metric}
    g_{\underline{\omega}} (u, v) \mu_{\underline{\omega}} = \frac{1}{6} \varepsilon_{ijk} \l u \iprod \omega_i\r \wedge \l v \iprod \omega_j\r \wedge \omega_k.
\end{equation}
It is important to note that $\mu_{\underline{\omega}}$ is the Riemannian volume form of $g_{\underline{\omega}}$ and is different from our reference form $\mu$ on $X^4$. However $\mu_{\underline{\omega}}$ can be expressed in terms of $Q$ and $\mu$, as we show in the following lemma. We want to make clear that $Q$ scales depending on $\mu$ but $\mu_{\underline{\omega}}$ is always expressed in the same way as in lemma \ref{relation between volume form and reference form}.  Furthermore, it is not obvious that \eqref{metric} gives a Riemannian metric, this is due to the positive definiteness of $Q$. We refer the reader to \cite{fine_2018} and \cite{fine2020reporthypersymplecticflow} for more details regarding this metric construction.
\begin{remark}
    Each $\omega_i$ in the positive triple is self-dual with respect to the metric and orientation given in \eqref{metric} \cite{fine_2018}.
\end{remark}

\begin{lemma}\label{lemma2} Let $S$ be a $3 \times 3$ matrices. Let $\det(S)$ be the determinant of $S$. Then 
\begin{equation}\label{det(s) Lemma}
\begin{split}
     \sum_{i,j,k} \varepsilon_{ijk}  S_{ip}  S_{jq}  S_{kl} &= \l \det(S)\r  \varepsilon_{pql}.
\end{split}
\end{equation}    
\end{lemma}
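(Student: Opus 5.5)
The plan is to treat the left-hand side as a function of the index triple $(p,q,l)$, written
\[
T_{pql} = \sum_{i,j,k} \varepsilon_{ijk} S_{ip} S_{jq} S_{kl}.
\]
I will show that $T$ is totally antisymmetric in $(p,q,l)$. Since $\{1,2,3\}$ has only three elements, this forces $T_{pql} = c\,\varepsilon_{pql}$ for a single constant $c$. I will then identify $c = T_{123}$ with $\det(S)$.

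\emph{Step 1 (antisymmetry).} Swap $p$ and $q$. Then
\[
T_{qpl} = \sum_{i,j,k} \varepsilon_{ijk} S_{iq} S_{jp} S_{kl}.
\]
Rename the dummy indices $i \leftrightarrow j$ to get $\sum \varepsilon_{jik} S_{jq} S_{ip} S_{kl}$. Using $\varepsilon_{jik} = -\varepsilon_{ijk}$ and commutativity of the scalar entries, this equals $-T_{pql}$. The same argument handles swaps of $q,l$ and of $p,l$. Consequently, if any two of $p,q,l$ coincide, then $T_{pql} = -T_{pql}$, so $T_{pql} = 0$. This agrees with $\varepsilon_{pql}=0$ in that case.

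\emph{Step 2 (distinct indices).} For distinct $(p,q,l)$ there is a permutation $\sigma$ with $(p,q,l) = (\sigma(1),\sigma(2),\sigma(3))$. Writing $\sigma$ as a product of transpositions and applying Step 1 repeatedly gives $T_{pql} = \operatorname{sgn}(\sigma)\, T_{123} = \varepsilon_{pql}\, T_{123}$.

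\emph{Step 3 (evaluation).} By definition,
\[
T_{123} = \sum_{i,j,k} \varepsilon_{ijk} S_{i1} S_{j2} S_{k3}.
\]
This is the Leibniz formula $\sum_{\tau} \operatorname{sgn}(\tau) \prod_{m} S_{\tau(m)m}$, which is the determinant expanded along columns, that is, $\det(S^T) = \det(S)$. If one prefers to avoid quoting $\det(S^T)=\det(S)$, the six nonzero terms can be expanded directly and compared with the cofactor expansion of $\det(S)$.

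I do not expect a real obstacle here. The only care needed is in the index renaming in Step 1. One must also note that the formula produces the column-wise Leibniz expansion, so either invoke $\det(S^T)=\det(S)$ or check the six terms by hand.
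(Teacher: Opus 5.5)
Your proposal is correct and follows the same route as the paper. Both arguments show total antisymmetry in $(p,q,l)$ to write the sum as $\lambda\,\varepsilon_{pql}$, then fix the constant. The only difference is in that last step: the paper contracts both sides with $\varepsilon_{pql}$ and uses $\sum \varepsilon_{pql}\varepsilon_{ijk}S_{ip}S_{jq}S_{kl} = 6\det(S)$, which it states without justification, whereas you evaluate at $(p,q,l)=(1,2,3)$ and read off the Leibniz formula. Your version is therefore slightly more self-contained, and you also write out the dummy-index renaming that the paper leaves implicit.
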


\begin{proof}
    The identity $\varepsilon_{ijk}  S_{ip}  S_{jq}  S_{kl}$ is skew in $p,q,$ and $l$ thus we can write the expression as being equal to $\lambda \varepsilon_{pql}$ for some scalar $\lambda$. That is,
    \[\sum_{i,j,k}\varepsilon_{ijk}  S_{ip}  S_{jq}  S_{kl} = \lambda \varepsilon_{pql}.\]
    If we take the above and contract it with $\varepsilon_{pql}$ we obtain 
    \begin{align*}
       6 \det(S) = \sum_{p,q,l} \sum_{i,j,k} \varepsilon_{pql} \varepsilon_{ijk}  S_{ip}  S_{jq}  S_{kl} &= \sum_{p,q,l} \lambda \varepsilon_{pql} \varepsilon_{pql} = 6 \lambda . 
    \end{align*}
    We substitute $\lambda = \det(S)$ and we obtain the desired result.
 \end{proof}

\begin{lemma} \label{relation between volume form and reference form}
Let $\underline{\omega}$ be a positive triple on $X^4$. Let $\mu$ be the reference volume form for $\underline{\omega}$ as in definition \ref{positivetriple}. Let $g_{\underline{\omega}} \otimes \mu_{\underline{\omega}}$ be defined as in \eqref{metric}. Then $\mu_{\underline{\omega}} = \l \det(Q)\r^{\frac{1}{3}} \mu$.
\end{lemma}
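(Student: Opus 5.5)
The identity $\mu_{\underline{\omega}} = (\det Q)^{1/3}\mu$ is pointwise linear algebra, so I would fix a point $x\in X^4$ and work on $T_xX$. The plan is to use Lemma \ref{lemma2} to see how the right-hand side of \eqref{metric} transforms when the triple is mixed by a $3\times3$ matrix. This reduces the problem to a triple with $Q=I$, which can then be put in a standard form.

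\emph{Step 1 (equivariance).} Define the symmetric bilinear, $\Lambda^4$-valued form
\[G_{\underline{\omega}}(u,v)=\tfrac16\varepsilon_{ijk}(u\iprod\omega_i)\wedge(v\iprod\omega_j)\wedge\omega_k .\]
Suppose $\omega_i = P_{ia}\omega^0_a$ for an invertible constant matrix $P$. Then
\[G_{\underline{\omega}}=\tfrac16\,\varepsilon_{ijk}P_{ia}P_{jb}P_{kc}\,(u\iprod\omega^0_a)\wedge(v\iprod\omega^0_b)\wedge\omega^0_c .\]
By Lemma \ref{lemma2} this equals $\det(P)\,G_{\underline{\omega}^0}$. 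From \eqref{eq:B1} we also get $Q=PQ^0P^{T}$, hence $\det Q=\det(P)^2\det Q^0$.

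\emph{Step 2 (normalisation).} Since $Q$ is positive definite, I take $P=Q^{1/2}$ and $\omega^0=Q^{-1/2}\underline{\omega}$. Then $\omega^0_i\wedge\omega^0_j=2\delta_{ij}\mu$ and $\det P=(\det Q)^{1/2}>0$.

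\emph{Step 3 (standard triple).} I claim there is a coframe $e^1,\dots,e^4$ of $T^*_xX$ with $\mu=e^{1234}$ and
\[\omega^0_1=e^{12}+e^{34},\quad \omega^0_2=e^{13}-e^{24},\quad \omega^0_3=e^{14}+e^{23}.\]
To prove this, note that the wedge pairing makes $\Lambda^2T^*_x$ a space of signature $(3,3)$, and $\mathrm{SL}(4)$ maps onto the identity component of $\mathrm{SO}(3,3)$. Hence $\mathrm{SL}(4)$ acts transitively on oriented orthonormal positive triples, and $\mu$ is preserved because $\mathrm{SL}(4)$ fixes $e^{1234}$. If the given triple has the opposite orientation inside the positive $3$-plane, I would replace $\omega^0$ by $-\omega^0$. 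This changes $P$ to $-P$ and would flip a sign, so the orientation convention of $X^4$ (for which the $\omega_i$ are self-dual and $g_{\underline{\omega}}$ is positive) is what fixes this choice.

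For the standard triple, a direct computation of $G_{\underline{\omega}^0}$ on basis vectors should give
\[G_{\underline{\omega}^0}=g_0\otimes e^{1234},\]
where $g_0$ is the Euclidean metric in the frame $e^i$. I would check this on $e_1,e_1$ and on $e_1,e_2$, and use the symmetry of the triple for the remaining entries.

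\emph{Step 4 (conclusion).} Combining the steps gives $G_{\underline{\omega}}=(\det Q)^{1/2}\,g_0\otimes\mu$. By definition $G_{\underline{\omega}}=g_{\underline{\omega}}\otimes\mu_{\underline{\omega}}$, with $\mu_{\underline{\omega}}$ the Riemannian volume form of $g_{\underline{\omega}}$. Writing $g_{\underline{\omega}}=c\,g_0$ with $c>0$, we get $\mu_{\underline{\omega}}=c^2\mu$, because we are in dimension four and $\mu$ is the $g_0$-volume form. Therefore $c^3=(\det Q)^{1/2}$, so $c^2=(\det Q)^{1/3}$, which is the claim.

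The main obstacle is Step 3: the transitivity and orientation argument that puts a $Q=I$ triple into the standard form while keeping $\mu$ fixed. The explicit check of $G_{\underline{\omega}^0}$ is routine once the conventions are fixed.
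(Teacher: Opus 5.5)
Your proposal is correct and follows essentially the same route as the paper. Both arguments normalise the triple to $Q=\mathbb{I}_{3\times 3}$ by a linear change, use Lemma \ref{lemma2} to extract the determinant of the change-of-basis matrix, place the normalised triple in a standard coframe where the right-hand side of \eqref{metric} becomes $\delta_{ab}\,\mu$, and then solve for the conformal factor; your scalar equation $c^3=(\det Q)^{1/2}$ is the paper's identity $\det(g_{\underline{\omega}})^3=\det(M)^4=\det(Q)^2$ in different form.

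Your use of the positive square root $Q^{1/2}$ and your explicit orientation remark are slightly more careful than the paper. The paper takes an arbitrary normalising matrix, but its asserted standard coframe with $\mu=e^{0123}$ tacitly requires that matrix to have positive determinant.
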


\begin{proof}
To begin we emphasize that we are doing these computations at some fixed point in $X^4$ as this result and proof is entirely linear algebra. Let $\underline{\omega}$ be a positive triple. Let $\mu$ and $Q$ be such that $2Q_{ij} \mu = \omega_i \wedge \omega_j$. Since $Q$ is a symmetric matrix, there exists an invertible $3 \times 3$ matrix $P$ such that if $\theta_1,\theta_2,\theta_3$ are defined by 
    \[\theta_i = P_{il} \omega_l \quad \text{then} \quad \theta_i \wedge \theta_j= 2 \delta_{ij}\mu. \]
    We note that $\underline{\theta} = \l \theta_1, \theta_2, \theta_3 \r$ is still a positive triple whose matrix $Q_{\underline{\theta}} = \mathbb{I}_{3 \times 3}$.  When we expand the expression for the wedge product above, we obtain
    \[P_{ik} \omega_k \wedge P_{jl} \omega_l = P_{ik} P_{jl} \omega_k \wedge \omega_l = P_{ik}P_{jl} 2 Q_{kl} \mu, \]
    \[\implies P_{ik} Q_{kl}P_{jl} = \delta_{ij}.\]
    This tells us that $PQP^{T} = \mathbb{I}_{3 \times 3}$. Let $M = P^{-1}$. By taking the determinant of the expression above, we have
    \[\det(P) \det(Q) \det(P) = 1 \quad \text{and hence} \quad \det(Q) = (\det(M))^{2}. \]
    Since we let $M = P^{-1}$, we can say that $\omega_i = M_{il} \theta_l$. Now let $g_{\underline{\theta}}$ be the metric coming from the triple $\underline{\theta}$ defined as in \eqref{metric}. One can show that there locally exists a coframe $e^0,e^1,e^2,e^3$ that is orthonormal with respect to $g_{\underline{\theta}}$ so that $\theta_i = e^0 \wedge e^i + \frac{1}{2} \varepsilon_{ijk} e^j \wedge e^k$ and $\mu = e^0\wedge e^1 \wedge e^2 \wedge e^3$. Let $e_i$ be the dual frame to $e^i$. We use the formula for the metric \eqref{metric} to say 
    \begin{equation}\label{proof 1 identity}
        \frac{1}{6} \varepsilon_{ijk} \l e_a \iprod \theta_i \r \wedge \l e_b \iprod \theta_j\r \wedge \theta_k = \delta_{ab} \mu.
    \end{equation}
    We know that given a basis, $\mu_{\underline{\omega}} = \sqrt{\det(g_{\underline{\omega}})} \mu$ where $g_{\underline{\omega}}$ is the metric determined by the triple $\underline{\omega}$. We wish to determine $\sqrt{\det(g_{\underline{\omega}})}$. To do this we know that $g_{\underline{\omega}}$ can be expressed as a $4 \times 4$ matrix, thus if we let $R$ be a $4 \times 4$ matrix such that $R_{ab} \mu = g_{\underline{\omega}}(e_a,e_b) \mu_{\underline{\omega}}$ and use  lemma \ref{lemma2} and \eqref{proof 1 identity} then
    \begin{align*}
        R_{ab} \mu = g_{\underline{\omega}}(e_a,e_b) \mu_{\underline{\omega}} &= \frac{1}{6} \varepsilon_{ijk} \l e_a \iprod \omega_i \r \wedge \l e_b \iprod \omega_j\r \wedge \omega_k \\
        &= \frac{1}{6} \varepsilon_{ijk}M_{ip}M_{jq}M_{kl} \l e_a \iprod \theta_p \r \wedge \l e_b \iprod \theta_q\r \wedge \theta_l \\
        &= \frac{1}{6} \det(M) \varepsilon_{pql} \l e_a \iprod \theta_p \r \wedge \l e_b \iprod \theta_q\r \wedge \theta_l \\
        &= \det(M) \delta_{ab} \mu.
    \end{align*}
 The above computation determines that $R = \det(M) \mathbb{I}_{4 \times 4}$. We use this computation and the fact that $\mu_{\underline{\omega}} = \sqrt{\det(g_{\underline{\omega}})} \mu$ to obtain
   \begin{equation}\label{proof 1 identity 2}
       \begin{split}
           R = g_{\underline{\omega}} \sqrt{\det(g_{\underline{\omega}})} = \det(M) \mathbb{I}_{4 \times 4}.
       \end{split}
   \end{equation}
   We apply the determinant to \eqref{proof 1 identity 2} to obtain
   \begin{align*}
       \det(g_{\underline{\omega}} \sqrt{\det(g_{\underline{\omega}})}) &= \det(\det(M) \mathbb{I}_{4 \times 4}) \\
       \det(g_{\underline{\omega}})^3 &= \det(M)^4 \\
       \det(g_{\underline{\omega}})^3 &= \det(Q)^2  \quad \text{since} \quad \det(Q) = \det(M)^2\\
       \sqrt{\det(g_{\underline{\omega}})} &= \det(Q)^{\frac{1}{3}}.
   \end{align*}
  We substitute this last equation back into the formula for $\mu_{\underline{\omega}}$ in terms of $\mu$ and obtain our desired result $\mu_{\underline{\omega}} = \l \det(Q)\r^{\frac{1}{3}} \mu$.
\end{proof}
\noindent This lemma tells us that when $\det(Q) =1$ the volume form $\mu_{\underline{\omega}}$ is equal to the reference volume form $\mu$. In particular, it tells us how to choose $\mu$ so that the determinant of $Q$ is always $1$. If we first construct $\mu_{\underline{\omega}}$ using the formula for the metric \eqref{metric} and choose $\mu = \mu_{\underline{\omega}}$ this ensures that $\det(Q) = 1$. This is important in particular because as we flow our geometric structures in time $\mu_{\underline{\omega}}$, $\underline{\omega}$, and $Q$ will depend on time but the determinant of $Q$ at any fixed point in time will always be $1$.

\noindent The relation \eqref{eq:B1} and the Riemannian metric \eqref{metric} give us three potential structures on $X^4$. The first happens when $Q$ is the identity matrix. In this case the triple $\underline{\omega}$ forms an $\text{SU}(2)$ structure, see \cite{Fowdar_2025}. The second is a hypersymplectic structure, this is the case where the forms $\omega_i$ are symplectic, meaning they are closed, that is $d\omega_i = 0$. In this case $Q$ need not necessarily be the identity matrix, see \cite{fine_2018}. The last is a hyperk\"ahler structure. In this case the triple forms an $\text{SU}(2)$ structure and a hypersymplectic structure, meaning the forms $\underline{\omega}$ are symplectic (that is they are closed) and the matrix $Q$ is the identity, see \cite{fine2024convergencehypersymplecticflowt4}, \cite{donaldson2016adiabaticlimitscoassociativekovalevlefschetz}. We give the full definition of hypersymplectic and hyperk\"ahler below.  

\begin{Def} \label{Hyperkahler definition}
Let $(M,g)$ be a Riemannian manifold equipped with three complex structures $J_1, J_2, J_3 \colon TM \to TM$ such that 
\begin{equation*}
    J_1^2 = J_2^2 = J_3^2 = J_1J_2J_3 = -1.
\end{equation*}
If $J_1,J_2,J_3$ are parallel then $(M,g, J_1, J_2, J_3)$ is a hyperk\"ahler manifold. Furthermore, we have $\omega_i(x,y) = - g(x, J_iy)$ for the K\"ahler forms. There is one for each complex structure $J_1,J_2,J_3$. These $2$-forms are closed and non-degenerate. 
\end{Def}
\begin{remark}
    When a positive triple $\underline{\omega}$ is hyperk\"ahler, where the forms $\omega_i$ are given as in the definition above, the matrix $Q$ that comes from the positive triple is the identity. We show this below.
\end{remark}
\noindent To verify the remark made above, let $M$ be a $4$-dimensional hyperk\"ahler manifold. Let $e_0, e_1, e_2, e_3$ be orthonormal basis with respect to $g$ as in definition \ref{Hyperkahler definition}, let $e^i$ for $i = 0,1,2,3$ be its dual, and let $\mu = e^{0123}$. Let $J_i$ be the three complex parallel structures on $M$ so that $\omega_i (x,y) = -g(x, J_i y)$ and $J_i(e_0) = e_i$, then we have the following relations: 
\begin{align*}
    \omega_i(e_0,e_i) &= -g(e_0, J_i(e_i)) = 1, \\
    \omega_i(e_0,e_j) &= -g(e_0,J_i(e_j)) = -g(e_0,e_k) = 0, \\
    \omega_i (e_j,e_i) &= -g(e_j,J_i(e_i)) = g(e_j,e_0) = 0,\\
    \omega_i (e_j,e_k) &= -g(e_j, J_i(e_k)) = g(e_j,e_j) = 1.
\end{align*}
This allows us to say that in this basis $\omega_i = e^0\wedge e^i + e^j \wedge e^k$. It is then easy to see that when the triple is written in this form the relation $\omega_i \wedge \omega_j = 2 \delta_{ij} \mu$ is satisfied. Since $Q= \mathbb{I}_{3 \times 3}$ and $\det(Q)=1$ we have $\mu = \mu_{\underline{\omega}}$. Furthermore, one can show that $g_{\underline{\omega}}$ is indeed the hyperkahler $g$ from definition \ref{Hyperkahler definition}.

\begin{remark}
    When the $2$-forms $\omega_i$ are all closed, the complex structures they define are integrable, see \cite{HitchinHK}, \cite{MR1798605}. Since the intrinsic torsion of an $\text{SU}(2)$ structure vanishes if and only if each $2$-form $\omega_i$ is closed, see \cite{Fowdar_2025}, one can think of a hyperk\"ahler structure in dimension $4$ as a torsion free $\text{SU}(2)$ structure.   
\end{remark}
\begin{Def}\label{Definitions HS 1}
Let $X^4$ be a $4$ dimensional oriented smooth manifold. A hypersymplectic structure on $X^4$ is a positive triple of symplectic forms $\underline{\omega} = (\omega_1, \omega_2, \omega_3)$.
\end{Def}
 \noindent We can always take a hypersymplectic triple $\underline{\omega}$ and diagonalize so that $Q = \mathbb{I}_{3 \times 3}$.  This gives us a $\text{SU}(2)$ structure at the cost of the closed-ness of the triple. When the matrix $Q$ coming from the hypersymplectic triple is constant, we can apply a linear transformation to our triple $\underline{\omega}$ so that it is now a triple $\Tilde{\underline{\omega}}$ for which $Q$ is now the identity matrix, and $\tilde{\underline{\omega}}$ is closed thus hyperk\"ahler. Then $\underline{\omega}$ induces the same metric as $\tilde{\underline{\omega}}$ up to a constant factor. We show this explicitly now. Let $\underline{\omega}$ be a hypersymplectic triple where $Q$ given as in definition \ref{Definitions HS 1} is constant. Then there exists some constant matrix $A$ that diagonalizes $Q$, meaning we can define another triple $\tilde{\underline{\omega}}$ by $\tilde{\omega}_i = A_{ik} \omega_k$ where $\tilde{\omega_i} \wedge \tilde{\omega_j} = 2\delta_{ij} \mu$. Since $A$ is constant the triple $\tilde{\underline{\omega}}$ is still closed since 
\[d\tilde{\omega}_i = dA_{ik} \wedge \omega_k + A_{ik}  d \omega_k = 0.\]
Furthermore, let the Riemannian metric as defined in \eqref{metric} derived using the triple $\tilde{\underline{\omega}}$ be $g_{\tilde{\underline{\omega}}}$ and let the metric defined using $\underline{\omega}$ be denoted by $g_{\underline{\omega}}$. Then the metrics are constant multiples of one another: 
\begin{equation}\label{determinant and metric relation}
    g_{\tilde{\underline{\omega}}} = \det(A) g_{\underline{\omega}}.
\end{equation}
This computation can be easily seen from some of the computations done in the proof of lemma \ref{relation between volume form and reference form}. Since $A$ is a constant matrix, $\det(A)$ is also constant. Therefore the metric $g_{\underline{\omega}}$ is still Ricci-flat. We conclude that when $Q$ is constant for a hypersymplectic triple we are able transform to a hyperk\"ahler triple whose metric is the same up to a constant factor. 

%---------------------------------------------------------------------------------------------------------------------------

%This is the chapter where we consructut the positive triple and the different closed and co closed structures 

\section{$G_2$ structures and the hypersymplectic flow} \label{Chaper Laplacian Co-flow}
In Section \ref{Section Geometric Structures} we introduced a positive triple $\underline{\omega}$. In this section we begin by introducing a dual positive triple. Using a closed positive triple (a hypersymplectic structure) we can construct either a closed $G_2$ structure or a coclosed $G_2$ structure. Whichever one we choose to construct, the dual triple will show up in the other form. If we make the assumptions that the dual triple is also closed then we obtain a torsion free $G_2$ structure. In \cite{fine_2018}, the authors used a hypersymplectic triple to define a $3$-form $\varphi$ that gave a closed $G_2$ structure on $\mathbb{T}^3 \times X^4$. We follow this analogy and construct a coclosed $G_2$ structure on $\mathbb{T}^3 \times X^4$ by defining a $4$-form using a hypersymplectic triple in a similar manner to Fine-Yao in \cite{fine_2018} and the dual to this triple will show up in the construction of the $3$-form. We find that this $G_2$ structure is invariant under $\mathbb{T}^3$.  Then we run the $G_2$-Laplacian coflow that is defined in \cite{grigorian2018flowscoclosedg2structures} on our $4$-form. In our particular setting the Laplacian coflow and modified coflow agree. We verify that solutions to this flow are also invariant under $\mathbb{T}^3$.  We find that it descends to a flow on $X^4$ of the dual hypersymplectic triple which is precisely the hypersymplectic flow defined in $\cite{fine_2018}$. 

\subsection{Dual positive triple and a coclosed $G_2$ structure}\label{Dual positive triple and a coclosed $G_2$ structure}

Given some positive triple $\underline{\omega}$ as in definition \ref{positivetriple} we can define a dual positive triple. If $Q$ is the symmetric positive definite matrix that defines the positive triple $\underline{\omega}$, then its inverse matrix $Q^{-1}$ is also symmetric and positive definite. We use this to define a new positive triple $\underline{\sigma}$ called the dual positive triple. 

\begin{lemma}
     Let $X^4$ be an oriented manifold of dimension four, let $\underline{\omega} = (\omega_1, \omega_2, \omega_3)$ be a positive triple on $X^4$.  
    Let $Q$ be the matrix determined by $\underline{\omega}$ and the volume form $\mu_{\underline{\omega}}$ so that $\det(Q) = 1$, as in lemma \ref{relation between volume form and reference form}. Then $\underline{\sigma} = (\sigma_1, \sigma_2, \sigma_3)$ given by
    \begin{equation}\label{dualform}
        \sigma_i = \l Q^{-1}\r_{ik} \omega_k = Q^{ik} \omega_k
    \end{equation}
    is another a positive triple on $X^4$.
\end{lemma}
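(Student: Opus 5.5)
We must show that the matrix of the triple $\underline{\sigma}$ is positive definite at every point, computed against the same reference form $\mu_{\underline{\omega}}$. The statement is pointwise linear algebra, so we fix a point of $X^4$ and work there.

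First we compute the wedge products directly. Using \eqref{dualform}, the symmetry of $Q^{-1}$, and the defining relation \eqref{eq:B1} with $\mu=\mu_{\underline{\omega}}$, we get
\begin{equation*}
\sigma_i\wedge\sigma_j = Q^{ik}Q^{jl}\,\omega_k\wedge\omega_l = 2\,Q^{ik}Q_{kl}Q^{lj}\,\mu_{\underline{\omega}} = 2\,Q^{ij}\,\mu_{\underline{\omega}}.
\end{equation*}
So the matrix associated to $\underline{\sigma}$ with respect to $\mu_{\underline{\omega}}$ is exactly $Q^{-1}$.

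Next we check positivity. Since $Q$ is symmetric positive definite, its eigenvalues are all positive, so $Q^{-1}$ is symmetric with positive eigenvalues and hence positive definite. This holds at every point, so by Definition \ref{positivetriple} (with reference form $\mu_{\underline{\omega}}$) the triple $\underline{\sigma}$ is positive. We also note that $\det(Q^{-1})=1$. Combined with Lemma \ref{relation between volume form and reference form}, this gives $\mu_{\underline{\sigma}}=\mu_{\underline{\omega}}$, so $\mu_{\underline{\omega}}$ is also the normalized volume form for $\underline{\sigma}$. This fact is not needed for positivity, but it is the form in which the dual triple will be used later.

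There is no real obstacle; the one point needing care is the index bookkeeping. We must use the symmetry of $Q^{-1}$ so that the contraction collapses to $Q^{-1}QQ^{-1}=Q^{-1}$. We must also stress that positivity is measured with the same reference form $\mu_{\underline{\omega}}$. A positive rescaling of the reference form would preserve positive definiteness in any case.
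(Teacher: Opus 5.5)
Your proposal is correct and follows the paper's proof essentially verbatim: you compute $\sigma_i\wedge\sigma_j = 2Q^{ij}\mu_{\underline{\omega}}$ using the symmetry of $Q^{-1}$, and conclude from the positive definiteness of $Q^{-1}$. Your side remark that $\det(Q^{-1})=1$ gives $\mu_{\underline{\sigma}}=\mu_{\underline{\omega}}$ is the same observation the paper makes immediately after the proof.
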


\begin{proof}
    We remark that since $Q$ is symmetric and positive definite, so is $Q^{-1}$. Thus if $\omega_i \wedge \omega_j = 2 Q_{ij} \mu_{\underline{\omega}}$ then the triple $\underline{\sigma}$ given as in \eqref{dualform} satisfies
\begin{equation}\label{dual form wedge}
\begin{split}
    \sigma_i \wedge \sigma_j = \l Q^{ik} \omega_k\r \wedge \l Q^{jl} \omega_l\r &= Q^{ik}Q^{jl} \omega_k \wedge \omega_l \\
    &= 2 Q^{ik}Q^{jl}Q_{kl}  \mu_{\underline{\omega}} \\
    &= 2 Q^{ik} \delta_{jk}  \mu_{\underline{\omega}} \\
    &= 2 Q^{ij}  \mu_{\underline{\omega}}.
\end{split}
\end{equation}
Since $Q^{-1}$ is positive definite this along with the computation \eqref{dual form wedge} confirms that the triple $\underline{\sigma}$ given as in \eqref{dualform} defines a positive triple on $X^4$.
\end{proof}

\begin{remark}
     We call the positive triple \eqref{dualform} the dual positive triple to $\underline{\omega}$.
\end{remark}

\noindent If we require the $2$-forms $\sigma_i$ to be closed, then the triple $\sigma_i$ is a hypersymplectic triple. It is important to note that if the positive triple $\underline{\omega}$ is closed it is not necessarily true that the dual triple $\underline{\sigma}$ is also closed. Furthermore we note that the computation \eqref{dual form wedge}, alongside the way we have chosen $ \mu_{\underline{\omega}}$ so that $\det(Q)=1$ tells us that $ \mu_{\underline{\omega}} =  \mu_{\underline{\sigma}}$. 

 \noindent In \cite{fine_2018} the authors use a hypersymplectic triple $\underline{\omega}$ on $X^4$ to define a $G_2$ form $\varphi = dt^{123} - dt^i \wedge \omega_i$ on $\mathbb{T}^3 \times X^4$ that is closed. This form induces the following metric 
 \begin{equation}\label{Fine Yao G_2 metric}
     g_{\varphi} = Q_{ij} dt^i dt^j + g_{\underline{\omega}}
 \end{equation}
 where $g_{\underline{\omega}}$ is the metric on $X^4$ as in \eqref{metric}, $Q$ is the matrix from the triple $\underline{\omega}$ and $ \mu_{\underline{\omega}}$, and $t^i$ for $i = 1,2,3$ are the coordinates on $\mathbb{T}^3$. We now show that if we use the dual positive triple, the metric we obtain via \eqref{metric} is the same. Let $\underline{\omega}$ be a positive triple and let $\underline{\sigma}$ be the dual triple defined as in \eqref{dualform}. Using the formula for the metric \eqref{metric} we obtain 
\begin{align*}
    g_{\underline{\sigma}}(u,v) \mu_{\underline{\sigma}} &= \frac{1}{6} \varepsilon_{ijk}\l u \iprod \sigma_i\r \wedge \l v \iprod \sigma_j \r \wedge \sigma_k \\
    &= \frac{1}{6} \varepsilon_{ijk}\l u \iprod Q^{im} \omega_m\r \wedge \l v \iprod Q^{jn} \omega_n\r \wedge Q^{kl} \omega_l \\
    &= \frac{1}{6} \varepsilon_{ijk} Q^{im} Q^{jn}Q^{kl} \l u \iprod \omega_m\r \wedge \l v \iprod \omega_n\r \wedge \omega_l \\
    &= \frac{1}{6} \det (Q^{-1}) \varepsilon_{mnl} \l u \iprod \omega_m\r \wedge \l v \iprod \omega_n\r \wedge \omega_l \\
    &=  g_{\underline{\omega}}(u,v) \mu_{\underline{\omega}}
\end{align*}
 where the last line uses the fact that we defined $Q$ using the volume form $ \mu_{\underline{\omega}}$ so that $\det(Q) = 1$. We will use a similar analogy to \cite{fine_2018} but instead we define the $G_2$ $4$-form $\psi$ using the dual hypersymplectic structure. Let $\underline{\omega}$ be a positive triple with matrix $Q$ and let $\underline{\sigma}$ be its dual positive triple. Suppose that $d\underline{\sigma} = 0$. Then we can define a closed $4$-form $\psi$ on $\mathbb{T}^3 \times X^4$ as follows
 \begin{equation}\label{psi}
     \psi =  \mu_{\underline{\sigma}} - dt^{12} \wedge \sigma_3 - dt^{31} \wedge \sigma_2 - dt^{23} \wedge \sigma_1.
 \end{equation}
 We claim that this defines a coclosed $G_2$ structure on $\mathbb{T}^3 \times X^4$ where $g_{\psi} = g_{\varphi}$ where $g_{\varphi}$ is as in \eqref{Fine Yao G_2 metric}. The way to see this is that using the Hodge star $*_7$ coming from the metric $g_{\varphi}$, we compute $*_7 \psi$ and obtain the precise $3$-form $\varphi$ defined in \cite{fine_2018} which induces the metric $g_{\varphi}$. Since every $G_2$ $4$-form induces a unique metric we obtain that $g_{\varphi} = g_{\psi}$. To this end, the closed $4$-form in \eqref{psi} gives a coclosed $G_2$ structure on $X^4$. Before we show that  $*_7 \psi$ is indeed $\varphi = dt^{123} - dt^i \wedge \omega_i$, we introduce some technical lemmas regarding the Hodge star computations. 

\begin{lemma}\label{hodge star decomp lemma}
   Let $(N^3, g_3)$ an $(X^4, g_4)$ be two Riemannian manifolds. Let $M^7 = N^3 \times X^4$ be a Riemannian manifold where at each point the metric splits as $g_7 = g_3 \oplus g_4$. Let $\alpha \in \Omega^k(N^3)$ and $\beta \in \Omega^l(X^4)$. Then
   \[*_7(\alpha \wedge \beta) = (-1)^{l(k+1)}(*_3 \alpha) \wedge (*_4 \beta).\]
\end{lemma}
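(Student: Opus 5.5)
The formula is pointwise and linear in both $\alpha$ and $\beta$, so I will prove it on basis elements. At a point, I choose a $g_3$-orthonormal coframe $f^1,f^2,f^3$ of $N^3$ and a $g_4$-orthonormal coframe $e^0,\dots,e^3$ of $X^4$. I orient them so that $\vol_3=f^{123}$ and $\vol_4=e^{0123}$, and I use $\vol_7=\vol_3\wedge\vol_4$ as the product orientation. Since the metric splits, the combined coframe $(f^1,f^2,f^3,e^0,\dots,e^3)$ is $g_7$-orthonormal. It therefore suffices to take $\alpha=f^I$ with $|I|=k$ and $\beta=e^J$ with $|J|=l$, where $I,J$ are increasing multi-indices. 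Let $I^c,J^c$ be the complementary multi-indices, so that $f^I\wedge f^{I^c}=\epsilon_I\vol_3$ and $e^J\wedge e^{J^c}=\epsilon_J\vol_4$ for signs $\epsilon_I,\epsilon_J$.

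In each dimension I use the defining property of the Hodge star on an orthonormal basis: $\gamma\wedge *\gamma=|\gamma|^2\vol$. This gives $*_3f^I=\epsilon_I f^{I^c}$ and $*_4e^J=\epsilon_J e^{J^c}$. For the wedge product $\alpha\wedge\beta=f^I\wedge e^J$, which is again a unit basis form, $*_7(f^I\wedge e^J)=\pm f^{I^c}\wedge e^{J^c}$. The sign is determined by requiring
\[
(f^I\wedge e^J)\wedge(f^{I^c}\wedge e^{J^c})=\vol_7 .
\]

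The key step, and the only real obstacle, is the sign bookkeeping in this last product. To bring it into the form $f^I\wedge f^{I^c}\wedge e^J\wedge e^{J^c}$, I move $e^J$ (degree $l$) past $f^{I^c}$ (degree $3-k$), which produces a factor $(-1)^{l(3-k)}$. Hence
\[
f^I\wedge e^J\wedge f^{I^c}\wedge e^{J^c}=(-1)^{l(3-k)}\epsilon_I\epsilon_J\,\vol_3\wedge\vol_4 .
\]
It follows that $*_7(f^I\wedge e^J)=(-1)^{l(3-k)}\epsilon_I\epsilon_J\, f^{I^c}\wedge e^{J^c}=(-1)^{l(3-k)}(*_3f^I)\wedge(*_4e^J)$.

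Finally, $l(3-k)\equiv l(k+1)\pmod 2$, since $3-k$ and $k+1$ differ by $2k-2$, which is even. This gives the stated sign $(-1)^{l(k+1)}$. Extending by linearity over all basis elements, and noting that the computation holds at every point, completes the argument. I will also remark that the proof uses the product orientation $\vol_7=\vol_3\wedge\vol_4$; this is the convention implicitly needed later when computing $*_7\psi$ on $\mathbb{T}^3\times X^4$.
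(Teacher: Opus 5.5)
Your proof is correct and rests on the same computation as the paper's: the identity $\gamma\wedge *\gamma=|\gamma|^2\vol$, the sign $(-1)^{l(3-k)}$ from moving the degree-$l$ factor past a $(3-k)$-form, and then $l(3-k)\equiv l(k+1) \pmod 2$. Your reduction to orthonormal basis monomials, where $*_7(f^I\wedge e^J)$ is known in advance to be $\pm f^{I^c}\wedge e^{J^c}$, is a genuine improvement: it justifies the step the paper leaves implicit, namely deducing $X=Y$ from $\alpha\wedge\beta\wedge X=\alpha\wedge\beta\wedge Y$, which does not follow for general forms.
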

\begin{proof}
 Let $\vol_7$, $\vol_3$, and $\vol_4$ be the volume forms on $M^7$, $N^3$, and $X^4$ respectively thus $\vol_7 = \vol_3 \wedge \vol_4$. Then by definition of $*_7$ we have 
\begin{align*}
    \alpha \wedge \beta \wedge  *_7(\alpha \wedge \beta) &= |\alpha \wedge \beta|^2_{g_7}\vol_7 \\
    &= |\alpha|^2_{g_3} |\beta|^2_{g_4}\vol_3 \wedge \vol_4\\
    &= \alpha \wedge *_3 \alpha \wedge \beta \wedge *_4 \beta \\
    &= (-1)^{l \l 3-k\r} \alpha \wedge \beta \wedge *_3 \alpha \wedge *_4 \beta \\
    &= (-1)^{l \l k+1\r} \alpha \wedge \beta \wedge *_3 \alpha \wedge *_4 \beta 
\end{align*}
 Therefore we can conclude that $*_7(\alpha \wedge \beta) = (-1)^{l(k+1)}(*_3 \alpha) \wedge (*_4 \beta)$.
\end{proof}

\begin{lemma}
     Let $X^4$ is a  $4$-manifold with a positive triple $\underline{\omega}$. Let $M^7 = \mathbb{T}^3 \times X^4$. Let $Q$ be the matrix determined by the positive triple and the reference volume form chosen so that $\det(Q) = 1$, as in definition \ref{positivetriple}. Let $t^i$ be the flat coordinates on $\mathbb{T}^3$. Let $\varphi = dt^{123} - dt^i \wedge \omega_i$ be $G_2$ structure on $M^7$ with induced metric $g_7 = Q_{ij} dt^idt^j + g_{\underline{\omega}}$. Then 
\begin{equation}\label{hodge star of 1 forms}
    \begin{split}
        *_3dt^1 &= Q^{13}dt^{12} + Q^{11}dt^{23} + Q^{21}dt^{31}, \\
        *_3dt^2 &= Q^{23}dt^{12} + Q^{21}dt^{23} + Q^{22}dt^{31},\\
        *_3dt^3 &= Q^{33}dt^{12} + Q^{31}dt^{23} + Q^{32}dt^{31},
    \end{split}
\end{equation}
    and 
    \begin{equation}\label{hodge star of 2 forms}
\begin{split}
     *_3 dt^{31} &= Q_{12} dt^1 + Q_{22}dt^2 + Q_{32} dt^3, \\
    *_3 dt^{12} &= Q_{31}dt^1 + Q_{32}dt^2 + Q_{33}dt^3, \\
    *_3 dt^{23} &= Q_{11}dt^1 + Q_{12}dt^2 +Q_{13}dt^3. 
\end{split}
\end{equation} 
\end{lemma}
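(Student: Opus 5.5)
The plan is to work entirely on the $\mathbb{T}^3$ factor, at a fixed point. There the metric is $g_3 = Q_{ij}\,dt^i dt^j$, whose matrix in the coframe $dt^1,dt^2,dt^3$ is $Q$ itself. Since $Q$ was chosen with $\det(Q)=1$, the Riemannian volume form of $g_3$ is $\sqrt{\det Q}\,dt^{123} = dt^{123}$; I take the orientation on $\mathbb{T}^3$ for which $dt^{123}$ is positive. The inverse metric on $1$-forms is $g_3(dt^i,dt^j) = Q^{ij}$.

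\textbf{Step 1: the $1$-forms.} I will verify the candidate $*_3 dt^i = Q^{ij}\,\hat{dt^j}$ directly from the defining identity $\alpha\wedge *_3\beta = g_3(\alpha,\beta)\,dt^{123}$, checking it against each basis $1$-form $\alpha = dt^k$. The key fact is $dt^k\wedge \hat{dt^j} = \delta_{kj}\,dt^{123}$. This holds because of the cyclic convention $\hat{dt^1}=dt^{23}$, $\hat{dt^2}=dt^{31}$, $\hat{dt^3}=dt^{12}$, which is exactly why the paper writes $dt^{31}$ rather than $dt^{13}$. It gives
\[
dt^k\wedge Q^{ij}\hat{dt^j} = Q^{ik}\,dt^{123} = g_3(dt^k,dt^i)\,dt^{123}.
\]
Since a $2$-form on a $3$-dimensional space is determined by its wedge with all $1$-forms, this identifies $*_3dt^i$. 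Writing out $i=1,2,3$ and using the symmetry of $Q^{-1}$ to reorder indices gives \eqref{hodge star of 1 forms}.

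\textbf{Step 2: the $2$-forms.} On a $3$-dimensional Riemannian manifold, $*_3*_3 = (-1)^{k(3-k)} = 1$ on $k$-forms. Applying $*_3$ to $*_3 dt^i = Q^{ij}\hat{dt^j}$ therefore gives $dt^i = Q^{ij}\, *_3\hat{dt^j}$. Multiplying by $Q_{li}$ and summing over $i$ inverts this to
\[
*_3 \hat{dt^l} = Q_{li}\, dt^i .
\]
For $l=1,2,3$ this is $*_3dt^{23}$, $*_3dt^{31}$, $*_3dt^{12}$ respectively, which is \eqref{hodge star of 2 forms} after using the symmetry of $Q$. As an independent check, one can instead verify $\hat{dt^k}\wedge Q_{li}dt^i = Q_{lk}\,dt^{123}$ against $g_3(\hat{dt^k},\hat{dt^l})$. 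The latter is the $(k,l)$ cofactor of $Q^{-1}$, which equals $\det(Q^{-1})\,Q_{kl} = Q_{kl}$.

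\textbf{Main obstacle.} There is no real difficulty; the only places an error can enter are bookkeeping. The first is the orientation and the sign conventions in the cyclic labels $\hat{dt^j}$. The second is remembering that $\det Q = 1$ is what makes $\mathrm{vol}_3 = dt^{123}$ with no extra factor. Without it, a factor $\det(Q)^{\pm1/2}$ would appear in both families of formulas. I will therefore state the normalization of the volume form explicitly at the start.
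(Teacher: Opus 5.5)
Your proposal is correct. Your Step 1 is the paper's argument for the $1$-form formulas: the paper also pairs $*_3 dt^i$ against $dt^1,dt^2,dt^3$ via $\alpha\wedge *_3 dt^i = g_7(\alpha,dt^i)\,dt^{123}$, with $\mathrm{vol}_3 = dt^{123}$ because $\det Q = 1$.

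For the $2$-form formulas you take a different route. The paper works each case directly from the definition. It pairs $*_3 dt^{31}$ (and, by analogy, the others) against the basis $2$-forms $dt^{12},dt^{31},dt^{23}$. It computes the induced inner products $g_7(dt^{ab},dt^{cd})$ as signed $2\times 2$ minors of $Q^{-1}$. It then recognises these as entries of $Q$ through $Q = \mathrm{adj}(Q^{-1})$, which uses $\det Q = 1$ a second time. You instead apply $*_3$ to the compact formula $*_3 dt^i = Q^{ij}\hat{dt^j}$, use $*_3 *_3 = 1$ in dimension three, and invert $Q^{-1}$.

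Your version has three advantages. It avoids the minor and adjugate bookkeeping. It uses the normalization $\det Q = 1$ only once, through $\mathrm{vol}_3$. It also makes clear that the two families of formulas are inverse to one another. The paper's version instead checks the $2$-form identities independently of the $1$-form ones, at the cost of that cofactor computation. That computation is exactly the independent check you sketch at the end of your Step 2.
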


\begin{proof}
    Let us start with proving \eqref{hodge star of 1 forms}. To begin we know from lemma \ref{hodge star decomp lemma} that $*_7 dt^i = *_3 dt^i \wedge \vol_4$. Let $\alpha \in \Omega^1(M^7)$, so  
    \begin{align*}
        \alpha \wedge *_7 dt^i &= \alpha \wedge *_3 dt^i \wedge \vol_4 \\
       \implies g_7(\alpha, dt^i) \vol_3 \wedge \vol_4 &= \alpha \wedge *_3 dt^i \wedge \vol_4 \\
       \implies g_7(\alpha,dt^i) dt^{123} &= \alpha \wedge *_3 dt^i
    \end{align*}
    which tells us that $*_3 dt^i$ can be computed using the metric $g_7$. Furthermore since $g_7$ has no mixed pieces we need only consider $\alpha \in \Omega^1(\mathbb{T}^3)$. Note that a similar argument can be made for $*_3 dt^{ij}$ and thus in that case we need only consider $\beta \in \Omega^2(\mathbb{T}^3)$. Returning to $*_3dt^i$, we only show the details for $*_3dt^1$ as the remaining computations are done similarly. Let the entries of $Q^{-1}$ be denoted by $Q^{ij}$. Let $*_3 dt^1 = A dt^{12} + B dt^{23} + Cdt^{31}$ where $A,B,C$ are smooth functions on $M^7$. We compute the following 
    \begin{align*}
        dt^1 \wedge *_3 dt^1 &= g_7(dt^1, dt^1) dt^{123} \\
        B dt^{123} &= Q^{11} dt^{123} \\
        \implies B &= Q^{11},\\ 
        dt^2 \wedge *_3 dt^1 &= g_7(dt^2,dt^1) dt^{123} \\
        C dt^{123} &= Q^{21} dt^{123}\\
        \implies C &= Q^{21},\\
        dt^{3} \wedge *_3 dt^1 &= g_7(dt^3, dt^1) dt^{123} \\
        \implies A &= Q^{31}.
    \end{align*}
   Combining these together we obtain that  $*_3dt^1 = Q^{13}dt^{12} + Q^{11}dt^{23} + Q^{21}dt^{31}$. We can repeat this same process for $*_3dt^2$ and $*_3 dt^3$ and obtain the other desired identities in $\eqref{hodge star of 1 forms}$. Now we compute the identities \eqref{hodge star of 2 forms}, and by similar logic to the case of $1$-forms, the computation of $*_3dt^{ij}$ can be done using only $g_7$ and we need only consider $2$-forms on $\mathbb{T}^3$. Recall that $\det(Q) = \det(Q^{-1}) = 1$ and therefore we can use the following formula for $Q = \frac{\mathrm{adj}(Q^{-1})}{\det(Q^{-1})} = \mathrm{adj}(Q^{-1})$. Using this identity and the fact that $Q$ is symmetric we can compute $*_3dt^{ij}$. We will only do this computation for $*_3dt^{31}$ as the remaining computations are done similarly. Let $*_3 dt^{31} = A dt^1 + B dt^2 + C dt^3$ then we obtain the following: 
   \begin{align*}
       dt^{12} \wedge *_3 dt^{31} &= g_7(dt^{12}, dt^{31}) dt^{123} \\
       C dt^{123} & = - \l Q^{23} Q^{11} - Q^{13} Q^{21} \r dt^{123} \\
       \implies C &= Q_{32}, \\
       dt^{31} \wedge *_3 dt^{31} &= g_7(dt^{31}, dt^{31}) dt^{123} \\
       B dt^{123} &= (Q^{33}Q^{11} - (Q^{13})^2) dt^{123}\\ 
       \implies B &= Q_{22}, \\
       dt^{23} \wedge *_3 dt^{31} &= g_7(dt^{23}, dt^{31}) dt^{123}\\
       A dt^{123} &= - (Q^{33}Q^{21} - Q^{23} Q^{31})dt^{123} \\
       \implies A &= Q_{12}. 
   \end{align*}
   Combining these together we obtain the desired result $*_3 dt^{31} = Q_{12} dt^1 + Q_{22}dt^2 + Q_{32} dt^3$.
\end{proof}

\noindent We now show that $*_7 \psi$ is indeed $\varphi = dt^{123} - dt^i \wedge \omega_i$. Using the fact that $\sigma_i$ are self dual and lemma \ref{hodge star decomp lemma} and \eqref{hodge star of 2 forms}  we compute
 \begin{equation*}
     \begin{split}
         \varphi = *_7 \psi &= dt^{123} - *_3dt^{12} \wedge \sigma_3 - *_3dt^{31} \wedge \sigma_2 - *_3dt^{23} \wedge \sigma_1 \\
         &= dt^{123} - \l Q_{3k} dt^k \r \wedge \sigma_3 - \l Q_{2k} dt^k\r \wedge \sigma_2 - \l Q_{1k} dt^k\r \wedge \sigma_1 \\
         &=  dt^{123} - \l Q_{3k} dt^k \r \wedge \l Q^{3i} \omega_i \r - \l Q_{2k} dt^k\r \wedge \l Q^{2i} \omega_i \r - \l Q_{1k} dt^k\r \wedge \l Q^{1i} \omega_i\r \\
         &= dt^{123} - dt^i \wedge \omega_i.
     \end{split}
 \end{equation*}
 We reiterate that $d\underline{\omega} =0$ gives a closed $G_2$ structure and $d\underline{\sigma} =0$ gives a coclosed $G_2$ structure. However if both the positive triple $\underline{\omega}$ and its dual positive triple $\underline{\sigma}$ are closed (both are hypersymplectic) then the $G_2$ structure they induce is both closed and coclosed and therefore torsion free.

 %------------------------------------------------------------------------------------------------------------------------

 %This is the sectio where we show T^3 invariance and show how the hypersymplectic flow descends from the G2 LAplacian flow 

\subsection{The hypersymplectic flow from the Laplacian coflow}\label{The hypersymplectic flow from the Laplacian coflow}
In the previous section we defined a coclosed $G_2$ structure on $\mathbb{T}^3 \times X^4$ coming from a hypersymplectic structure in an analogous way to what was done \cite{fine_2018}. In this section we compute the flow of the $4$-form $\psi$ from \eqref{psi} under the modified Laplacian coflow from \cite{Grigorian_2013}. To begin we note that in this section $X^4$ is compact. We first show that the $G_2$ structure given by $\psi$ as in \eqref{psi} and $\varphi$ as in $\eqref{G2 form}$ the Laplacian coflow and modified coflow agree. Let us recall the forms 
\begin{equation}\label{3 and 4 form}
    \begin{split}
         \varphi &= dt^{123} - dt^1 \wedge \omega_1 - dt^2 \wedge \omega_2 - dt^3 \wedge \omega_3, \\
    \psi &=  \mu_{\underline{\sigma}} - dt^{12} \wedge \sigma_3 - dt^{31} \wedge \sigma_2 - dt^{23} \wedge \sigma_1,
    \end{split}
\end{equation}
where $\underline{\omega}$ is a positive triple with dual triple $\underline{\sigma}$ with the further imposition that $d \underline{\sigma} = 0$ giving a coclosed $G_2$ structure as was defined in the previous section. The Laplacian coflow for coclosed $G_2$ structures is given by 
\begin{equation}\label{flow of psi 0}
    \frac{\partial \psi}{\partial t} = dd^{*_7}\psi = d *_7 d *_7 \psi
\end{equation}
where $d^{*_n} = (-1)^{nk+n+1} *_nd *_n$ on $k$-forms over a manifold of dimension $n$. Recall the modified Laplacian coflow for coclosed $G_2$ structures is given by 
\begin{equation} \label{modified coflow}
     \frac{\partial \psi}{\partial t} =  dd^* \psi + 2d\l \l A- \mathrm{Tr}T_{\psi} \r *_{\psi}\psi\r
\end{equation}
where $A$ is a choice of constant and $T$ is the full torsion tensor of the $G_2$ structure. Before we can compute how the hypersymplectic triple $\underline{\sigma}$ flows under this flow we first need to show that solutions to this flow remain $\mathbb{T}^3$ invariant for all time. 

\begin{theorem}
    Let $\psi(t)$ be a solution of the modified Laplacian coflow \eqref{modified coflow} on $\mathbb{T}^3 \times X^4$ with initial condition $\psi_0$ given by 
    \begin{equation}\label{initial condition for coflow}
        \psi_0 = \mu_{\underline{\sigma}} - \frac{1}{2}\varepsilon_{ijk} dt^{jk} \wedge \sigma_i
    \end{equation}
    where $\underline{\sigma}$ is hypersymplectic structure. Then $\psi(t)$ is $\mathbb{T}^3$ invariant for as long as it exists. Moreover, $\psi(t)$ remains of the form \eqref{3 and 4 form} for as long as it exists.
\end{theorem}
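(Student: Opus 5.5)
The plan follows the Fine--Yao argument for the Laplacian flow, with uniqueness for the modified coflow \eqref{modified coflow IVP} (Grigorian \cite{Grigorian_2013}) replacing the Bryant--Xu uniqueness. There are two parts: $\mathbb{T}^3$-invariance first, then preservation of the form \eqref{3 and 4 form}.

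\textbf{Step 1: $\mathbb{T}^3$-invariance.} For $s\in\mathbb{T}^3$ let $\tau_s$ be translation on the $\mathbb{T}^3$ factor. The initial form \eqref{initial condition for coflow} has coefficients $\sigma_i$ and $\mu_{\underline{\sigma}}$ pulled back from $X^4$, and $\tau_s^*dt^i=dt^i$. Hence $\tau_s^*\psi_0=\psi_0$.

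The right-hand side of \eqref{modified coflow} is diffeomorphism-natural, since $*_\psi$, $d^*$ and $\Tr \mathrm{T}_\psi$ are built invariantly from $\psi$. So $\tau_s^*\psi(t)$ also solves \eqref{modified coflow} with the same initial data $\psi_0$. It also lies in $\{\psi_0+d\eta\}$, because $\tau_s$ is isotopic to the identity and so acts trivially on cohomology. Uniqueness of solutions to \eqref{modified coflow IVP} then gives $\tau_s^*\psi(t)=\psi(t)$ for all $s$ and all $t$ in the existence interval.

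\textbf{Step 2: writing $\psi(t)$ in components.} By invariance, decompose
\[\psi(t)=\mu(t)+dt^{i}\wedge\beta_i(t)+\tfrac12\varepsilon_{ijk}\,dt^{jk}\wedge\gamma_i(t)+dt^{123}\wedge\alpha(t),\]
where $\mu,\beta_i,\gamma_i,\alpha$ are forms on $X^4$ of degrees $4,3,2,1$ with $t$-dependent coefficients independent of the torus coordinates. We must show that $\beta_i\equiv0$ and $\alpha\equiv0$, with $\mu$ and the $\gamma_i$ of the stated shape.

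\textbf{Step 3: the extra components vanish.} The plan is to realise the discrete symmetry $t^i\mapsto -t^i$ (all three signs flipped) combined with the orientation data. Under $\iota:t\mapsto -t$, we have $\iota^*\psi_0=\psi_0$, because each term of $\psi_0$ contains an even number of $dt$'s. The catch is that $\iota$ reverses the orientation of $\mathbb{T}^3$ and hence of $M^7$. A $G_2$ $4$-form only determines its metric up to a choice of orientation, so $\iota^*$ maps coflow solutions to coflow solutions provided the paper's convention fixes the orientation compatibly.

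I expect this to be the main obstacle. One has to check that $*_\psi$, $d^*\psi$ and the torsion term transform correctly under the orientation-reversing $\iota$. The relevant fact is that the $\psi$-induced orientation is determined by $\psi$ together with $\varphi=*\psi$, and $\iota^*\varphi=-\varphi$ is consistent with this, so the flow is $\iota$-equivariant. Granting that, uniqueness gives $\iota^*\psi(t)=\psi(t)$, which kills the odd components $\beta_i$ and $\alpha$.

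\textbf{Step 4: the remaining components.} With $\beta_i=\alpha=0$, the form is $\psi(t)=\mu(t)-\tfrac12\varepsilon_{ijk}dt^{jk}\wedge\sigma_i(t)$. Closedness of $\psi(t)$ gives $d_X\sigma_i(t)=0$.

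Positivity of $\psi(t)$ as a $G_2$ $4$-form, which holds along the flow, forces $\underline{\sigma}(t)$ to be a positive triple with $\mu(t)=\mu_{\underline{\sigma}(t)}$. This follows from the pointwise linear algebra of Section \ref{Dual positive triple and a coclosed $G_2$ structure}: a positive $4$-form of this shape is $\mathrm{GL}$-equivalent to the standard one exactly when the $\sigma_i$ span a positive $3$-plane and the $4$-form coefficient is the matching volume.

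So $\psi(t)$ retains the form \eqref{3 and 4 form} with $\underline{\sigma}(t)$ hypersymplectic. Its dual triple gives $\underline{\omega}(t)$ via $\omega_i=Q_{ik}\sigma_k$.

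As a fallback for Step 3, one can differentiate instead: if the right-hand side of the flow maps this ansatz into itself, then the ODE/PDE on $\underline{\sigma}$ yields a solution within the ansatz, and uniqueness identifies it with $\psi(t)$.
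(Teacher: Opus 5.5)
Your Steps 1--3 are the paper's argument. Naturality of the flow plus Grigorian's uniqueness gives invariance under translations, and the reflection $t\mapsto -t$ removes the components with an odd number of $dt$'s. The orientation worry in Step 3 disappears for $A=0$: $d^{*}_{\psi}$ contains two Hodge stars, while $\Tr \mathrm{T}_{\psi}$ and $*_{\psi}\psi$ each change sign when the orientation is reversed, so the right-hand side of the flow does not depend on the orientation.

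The genuine gap is Step 4: positivity does \emph{not} force $\mu(t)=\mu_{\underline{\sigma}(t)}$. For any function $f>0$ on $X^4$, the form $f\mu_{\underline{\sigma}}-\frac12\varepsilon_{ijk}dt^{jk}\wedge\sigma_i$ is a positive $4$-form. At each point it is the pull-back of the ansatz $4$-form of the triple $f^{1/2}\underline{\sigma}$ under the linear rescaling $t\mapsto f^{-1/4}t$ of the torus factor. Its metric is $f^{-1/2}Q_{ij}dt^idt^j+f^{1/2}g_{\underline{\omega}}$, which is not the ansatz metric unless $f\equiv 1$. So the ``matching volume'' clause of your linear-algebra claim is false; positivity only gives $\mu(t)=f\,\mu_{\underline{\sigma}(t)}$ with $f>0$.

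No other argument can supply $f\equiv 1$, because this constraint is not preserved by the flow. That also defeats your fallback. Inside the ansatz the $dt^{123}$-coefficient of $\varphi$ is $1$, so $d\varphi=dt^i\wedge d\omega_i$, $\Tr\mathrm{T}=0$, and $dd^*\psi$ has no pure $X^4$-component, exactly as in the paper's next computation. Suppose $\psi(t)$ stayed in the ansatz on some interval $[0,\epsilon)$. Then $\mu_{\underline{\sigma}(t)}$, and with it $\mathrm{Vol}(g_{\psi(t)})=\mathrm{vol}(\mathbb{T}^3)\int_{X^4}\mu_{\underline{\sigma}(t)}$, would be constant in time. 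But along the coflow $\frac{d}{dt}\mathrm{Vol}=\frac14\|d^*\psi\|^2_{L^2}$, so this forces $d^*\psi=0$, i.e.\ $d\underline{\omega}=0$ and $\psi_0$ torsion-free.

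The paper's own proof does not close this either. It asserts that closedness makes $\lambda$ constant on $X^4$, but $\lambda\,\mu_{\underline{\sigma}}(0)$ is a top-degree form on $X^4$ and is closed automatically. So the integration argument only conserves $\int_{X^4}\lambda\,\mu_{\underline{\sigma}}(0)$, and its conclusion \eqref{4 form for all time} has $\mu_{\underline{\sigma}}(0)$ in place of $\mu_{\underline{\sigma}(t)}$.

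With the positivity claim weakened as above, your Steps 1--4 do prove $\mathbb{T}^3$-invariance and $\psi(t)=f\,\mu_{\underline{\sigma}(t)}-\frac12\varepsilon_{ijk}dt^{jk}\wedge\sigma_i(t)$ with $d\sigma_i(t)=0$ and $f>0$. Stokes on the slices $\{p\}\times X^4$ also shows that $\int_{X^4}f\,\mu_{\underline{\sigma}(t)}$ is constant. The final claim that $\psi(t)$ stays of the form \eqref{3 and 4 form} fails unless the initial structure is already torsion-free.
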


\begin{proof}
    Let $\psi_0$ be as in \eqref{initial condition for coflow}. We want to show that if $\psi_0$ is $\mathbb{T}^3$ invariant than so is any solution of the modified Laplacian coflow with this initial condition. Let $F$ be any diffeomorphism and given that $\psi$ does not determine an orientation we fix the standard orientation on $\mathbb{T}^3 \times X^4$.  Let $\psi_t$ be a solution for the modified coflow with initial condition $\psi_0$. Then if $F$ is orientation preserving we fix the same orientation on $\mathbb{T}^3 \times X^4$ for $F^*\psi_t$ and if $F$ is orientation reversing we choose the opposite orientation for $F^*\psi_t$, this ensures in the computation below that the pullback of $F$ commutes with the Hodge star $*_{\psi_t}$, that is $*_{F^*_{\psi_t}}(F^* \alpha) = F^*(*_{\psi_t}\alpha)$ for a $k$-form $\alpha$ on $\mathbb{T}^3 \times X^4$.
    Then we compute:
\begin{equation}\label{diffeomorphism invarian coflow}
    \begin{split}
\frac{\partial}{\partial t} F^* \psi_t &= F^* \l \frac{\partial }{\partial t} \psi_t \r \\
&= F^*\l dd^{*_{\psi_t}} \psi_t + 2d\l \l A- \mathrm{Tr}T_{\psi_t} \r *_{\psi_t}\psi_t\r \r \\
&=  dd^{*_{F^*\psi_t}} F^*\psi_t + 2d\l \l A- \mathrm{Tr}T_{F^*\psi_t} \r *_{F^*\psi_t}F^*\psi_t\r.
    \end{split}
\end{equation}
This means that $\psi_t$ and $F^* \psi_t$ both solve the modified coflow with the same initial condition $\psi_0$ and therefore by uniqueness of solutions of the flow we obtain $F^* \psi_t = \psi_t$ for all $t$. Thus $\psi_t$ is $\mathbb{T}^3$-invariant for all time $t$ for which it exists. Next we claim that any solution $\psi(t)$ of the modified coflow has the form 
\begin{equation}\label{4 form for all time}
    \psi(t) = \mu_{\underline{\sigma}}(0) - \frac{1}{2}\varepsilon_{ijk} dt^{jk} \wedge \sigma_i(t)
\end{equation}
where $\mu_{\underline{\sigma}}(0)$ is the volume form that depends on the initial coclosed $G_2$ structure coming from the hypersymplectic triple $\underline{\sigma}$. To see this we let $\psi(t)$ be a solution of the modified coflow with initial condition $\psi_0$. We can write $\psi(t)$ as general as possible as
\begin{equation}\label{4 form general}
    \begin{split}
        \psi(t) = \lambda (t) \mu_{\underline{\sigma}}(0) + dt^i \wedge A_i(t) +\frac{1}{2}\varepsilon_{ijk} dt^{ij} \wedge B_k(t) + dt^{123} \wedge C(t)
    \end{split}
\end{equation}
where $\lambda(t) \in C^{\infty}(X^4)$, $A_i(t) \in \Omega^3(X^4)$, $B_k(t) \in \Omega^2(X^4)$ for all $i,k$ and $C(t) \in \Omega^1(X^4)$.  Since we know that the modified coflow preserves the closedness of the $4$-form we have that all of the forms in \eqref{4 form general} are closed. This tells us that $\lambda(t)$ is constant on $X^4$ and thus only depends on $t$. Now let $\theta: \mathbb{T}^3 \times X^4  \to \mathbb{T}^3 \times X^4$ be the map that takes $(t, p) \mapsto (-t,p) $. We also know from our earlier computation \eqref{diffeomorphism invarian coflow} that $\theta^* \psi_t = \psi_t$, since $\theta^* \psi_0 = \psi_0$. Then
\begin{equation}
    \begin{split}
        \theta^* \psi_t &= \lambda (t) \mu_{\underline{\sigma}}(0) - dt^i \wedge A_i(t) + \frac{1}{2}\varepsilon_{ijk} dt^{ij} \wedge B_k(t) - dt^{123} \wedge C(t) \\
        &= \psi_t
    \end{split}
\end{equation}
which tells that $A_i(t) = 0$ for all $i$ and $C(t) = 0$. Now we claim that $\lambda(t)$ is constant. To see this we compute
\begin{equation}\label{integral of phi}
    \begin{split}
        \int_{X^4} \psi_t &= \int_{X^4} \lambda(t) \mu_{\underline{\sigma}}(0) + \int_{X^4} \frac{1}{2}\varepsilon_{ijk} dt^{ij} \wedge B_k(t) \\
        &= \int_{X^4} \lambda(t) \mu_{\underline{\sigma}}(0) + 0 \\
        &= \lambda (t) \mathrm{vol}_0(X^4).
    \end{split}
\end{equation}
Now we take the time derivative of both sides of \eqref{integral of phi} and we recall that $\psi_t$ is a solution of the modified coflow and thus $\frac{\partial}{\partial t} \psi_t$ is an exact form so its integral will vanish. We obtain
\begin{equation}
    \begin{split}
        0 =\frac{\partial}{\partial t}\l \int_{X^4} \psi_t \r &= \frac{\partial}{\partial t} \l  \lambda (t) \mathrm{vol}_0(X^4)\r = \lambda'(t) \mathrm{vol}_0(X^4).
    \end{split} 
\end{equation}
Hence $\lambda(t)$ is constant and therefore $\lambda(t) = \lambda(0)$ for all $t$. This tells us that if we start with initial condition $\psi_0$ then any solution $\psi(t)$ of the modified coflow will remain of the form \eqref{4 form for all time} for all time for which it exists. In particular this tell us that only the $\sigma_i$'s will depend on time.
\end{proof}

\noindent Now we can return to the modified coflow and show that in our particular setting the coflow \eqref{flow of psi 0} and modified coflow \eqref{modified coflow} agree. In \eqref{modified coflow} we choose $A = 0$. The trace of the full torsion tensor given in  \cite{Karigiannis_2008} is described using the $G_2$ $3$-form as follows
\begin{equation}\label{Trace of torion tensor}
    \Tr \mathrm{T} = \frac{1}{4}*_{7} \l \varphi \wedge d \varphi \r.
\end{equation}
Substituting the $3$-form \eqref{3 and 4 form} into \eqref{Trace of torion tensor} we see that the trace of the torsion tensor vanishes. To see this we compute
\begin{equation*}
    \begin{split}
         \Tr \mathrm{T} &= \frac{1}{4}*_{7} \l \l dt^{123} - dt^i \wedge \omega_i\r \wedge \l dt^j \wedge d\omega_j \r \r \\
         &= \frac{1}{4}*_{7} \l - dt^{ij} \wedge \omega_i \wedge d\omega_j\r \\
         &= 0
    \end{split}
\end{equation*}
where the last equality comes from the fact that $\omega_i \wedge d \omega_j$ is $5$-form completely on $X^4$ and therefore must vanish. As a result for this particular $G_2$ structure the Laplacian coflow and modified coflow agree. Now we flow our coclosed $G_2$ structure \eqref{3 and 4 form} under the Laplacian coflow and we find that it will descend to a flow of the hypersymplectic triple $\underline{\sigma}$ on $X^4$. This flow is precisely the hypersymplectic flow found by Fine-Yao in \cite{fine_2018}.

\begin{theorem} Let $\underline{\sigma}$ be a hypersymplectic structure on $X^4$ and let $\psi$ as in \eqref{3 and 4 form} be the associated coclosed $G_2$ structure on $\mathbb{T}^3 \times X^4$. Then under the $G_2$-Laplacian coflow this descends to a flow of the hypersymplectic triple $\underline{\sigma}$ on $X^4$ given by 
\begin{equation}\label{my hypersymplectic flow}
    \partial_t \underline{\sigma} = d \l Q_{\sigma} d^*\l Q_{\sigma}^{-1} \underline{\sigma} \r\r
\end{equation}
 or on each component 
 \begin{equation*}
      \frac{\partial \sigma_j}{\partial t} =  d \l \l Q_{\sigma}\r_{jk} d^* \l\l Q_{\sigma}\r^{ki} \sigma_i\r \r.
 \end{equation*}
\end{theorem}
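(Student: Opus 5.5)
By the preceding theorem, the solution stays of the form \eqref{4 form for all time}: $\mu_{\underline{\sigma}}(0)$ is fixed and only the $\sigma_i$ depend on $t$. Writing $\hat{dt^l}=\frac12\varepsilon_{ljk}dt^{jk}$, this gives
\[
\partial_t\psi=-\hat{dt^l}\wedge\partial_t\sigma_l .
\]
Since the trace of the torsion vanishes, the modified coflow with $A=0$ is the coflow \eqref{flow of psi 0}. So it suffices to compute $d d^{*_7}\psi$ and show that it has the form $-\hat{dt^l}\wedge(\text{2-form on }X^4)$. Matching the coefficients of each $\hat{dt^l}$ then gives the equation for $\partial_t\sigma_l$. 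On $4$-forms in dimension $7$ the sign in $d^{*_7}$ is $(-1)^{28+7+1}=+1$, so $dd^{*_7}\psi=d*_7d*_7\psi$.

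\textbf{Step 1.} From the previous subsection, $*_7\psi=\varphi=dt^{123}-dt^i\wedge\omega_i$, where $\omega_i=Q_{ik}\sigma_k$. Here $Q$ is the matrix of $\underline{\omega}$, so $Q_\sigma=Q^{-1}$, and $\det Q=1$ with respect to $\mu_{\underline{\omega}}=\mu_{\underline{\sigma}}$. The $t^i$ are flat coordinates and every coefficient is $\mathbb{T}^3$-invariant, so
\[
d\varphi = dt^i\wedge d\omega_i ,
\]
where $d\omega_i$ is a $3$-form on $X^4$.

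\textbf{Step 2.} Apply Lemma \ref{hodge star decomp lemma} with $k=1$, $l=3$; the sign is $(-1)^{3\cdot 2}=1$. By \eqref{hodge star of 1 forms}, $*_3dt^i=Q^{il}\hat{dt^l}$. On $X^4$, $\omega_i$ is self-dual for $g_{\underline{\omega}}=g_{\underline{\sigma}}$, so
\[
*_4d\omega_i=*_4d*_4\omega_i=-d^*\omega_i .
\]
Hence
\[
*_7d\varphi=-Q^{il}\,\hat{dt^l}\wedge d^*\omega_i .
\]

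\textbf{Step 3.} Since $\hat{dt^l}$ is closed and the even degree produces no sign,
\[
d*_7d\varphi=-\hat{dt^l}\wedge d\bigl(Q^{il}d^*\omega_i\bigr).
\]
Comparing with $\partial_t\psi$ gives $\partial_t\sigma_l=d\bigl(Q^{li}d^*(Q_{ik}\sigma_k)\bigr)$. In terms of $Q_\sigma$ this reads
\[
\partial_t\sigma_l=d\bigl((Q_\sigma)_{li}\,d^*((Q_\sigma)^{ik}\sigma_k)\bigr),
\]
which is the claimed flow. It remains hypersymplectic, since $d\psi=0$ is preserved, and $Q_\sigma$ stays positive definite along the flow because $\psi$ remains a $G_2$ form.

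\textbf{Main obstacle.} The delicate part is sign and index bookkeeping: the convention $d^{*_n}=(-1)^{nk+n+1}*d*$, the sign in Lemma \ref{hodge star decomp lemma}, and the index placement in \eqref{hodge star of 1 forms}, together with the symmetry of $Q$. Two further points must be checked. First, the Hodge star on $X^4$ must be that of $g_{\underline{\sigma}}$ at time $t$, which holds because $g_\psi=Q_{ij}dt^idt^j+g_{\underline{\omega}}$. Second, the normalization $\det Q_\sigma=1$ must be compatible with the fixed $\mu_{\underline{\sigma}}(0)$. I would verify each sign on the model case $Q=\mathbb{I}$, where the flow must reduce to $\partial_t\sigma=dd^*\sigma$, before trusting the general computation.
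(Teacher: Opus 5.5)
Your Steps 1--3 reproduce the paper's proof essentially line for line: $*_7\psi=\varphi$, $d\varphi=dt^i\wedge d\omega_i$, Lemma \ref{hodge star decomp lemma} with $*_3dt^i=Q^{il}\hat{dt^l}$, self-duality of $\omega_i$, and matching coefficients of $\hat{dt^l}$. As an identity at an instant where $\psi$ has the form \eqref{psi}, the computation is correct.

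The gap is exactly the point you deferred under ``Main obstacle''. Your first point there is also circular, since $g_\psi=Q_{ij}dt^idt^j+g_{\underline{\omega}}$ is only known when $\psi$ has the form \eqref{psi}. Your first display $\partial_t\psi=-\hat{dt^l}\wedge\partial_t\sigma_l$ treats the $(0,4)$-component of $\psi$ as frozen at $\mu_{\underline{\sigma}}(0)$. Step 1 at time $t$, in particular $\det Q=1$ and $*_7\psi=dt^{123}-dt^i\wedge\omega_i$, instead requires that component to be $\mu_{\underline{\sigma}(t)}$. The preceding theorem, on which the paper's proof also relies, only yields \eqref{4 form for all time}. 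It never identifies $\mu_{\underline{\sigma}}(0)$ with $\mu_{\underline{\sigma}(t)}$.

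These two requirements are incompatible unless $\psi_0$ is torsion-free. By Lemma \ref{relation between volume form and reference form},
\[
\partial_t\mu_{\underline{\sigma}}=\frac13 Q_{ij}\,\partial_t\sigma_i\wedge\sigma_j=\frac13\,\partial_t\sigma_i\wedge\omega_i .
\]
At $t=0$, where $\psi_0$ does have the form \eqref{psi}, insert your equation $\partial_t\sigma_i=d\beta_i$ with $\beta_i=Q^{ik}d^*\omega_k$. Using $d\omega_i=*_4d^*\omega_i$, this gives
\[
\frac{d}{dt}\Big|_{t=0}\int_X\mu_{\underline{\sigma}}=\frac13\int_X\beta_i\wedge d\omega_i=\frac13\int_X Q^{ik}\langle d^*\omega_i,d^*\omega_k\rangle\,\mu_{\underline{\sigma}}>0
\]
unless $d\underline{\omega}=0$. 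On the other hand $[\psi(t)]=[\psi_0]$. Pairing with $[\{pt\}\times X^4]$ shows that the integral over $X^4$ of the $(0,4)$-component of $\psi(t)$ is constant. Hence for small $t>0$ that component equals $f\,\mu_{\underline{\sigma}(t)}$ with $f\not\equiv1$.

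A form $f\mu_{\underline{\sigma}}-\hat{dt^l}\wedge\sigma_l$ has dual $3$-form $f^{-3/4}dt^{123}-f^{1/4}dt^i\wedge\omega_i$ and metric $f^{-1/2}Q_{ij}dt^idt^j+f^{1/2}g_{\underline{\sigma}}$. So Step 1 fails for $t>0$, and the evolution of $\underline{\sigma}$ acquires $f$-dependent terms.

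What your argument, like the paper's, actually establishes is an instantaneous statement: at a $\psi$ of the form \eqref{psi}, the coflow velocity is $-\hat{dt^l}\wedge d\bigl(Q^{il}d^*\omega_i\bigr)$. To obtain a flow of $\underline{\sigma}$ alone you would need the ansatz \eqref{psi} to be preserved, and the computation above shows it is not.
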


\begin{proof}
Let $\underline{\sigma}$ be a hypersymplectic triple. Taking $\psi$ as in \eqref{psi}, the flow of $\psi$ is then 
\begin{equation}\label{flow of psi 1}
    \frac{\partial \psi}{ \partial t} = -dt^{12} \wedge \frac{\partial \sigma_3}{\partial t} - dt^{31}\wedge \frac{\partial \sigma_2}{\partial t} - dt^{23} \wedge \frac{\partial \sigma_1}{\partial t}.
\end{equation}
Recall that in this setting the coflow and modified coflow agree so we expand the right hand side of \eqref{flow of psi 0} using the $\psi$ we defined in \eqref{psi}. To make the following computation easier to read we let $Q$ denote $Q_{\underline{\omega}}$ the matrix associated with the positive triple $\underline{\omega}$. Recall that $\underline{\omega}$ is the dual positive triple to $\underline{\sigma}$, and that $Q_{\underline{\omega}}^{-1} = Q_{\underline{\sigma}}$. Therefore using the fact that the $\sigma_i$'s are self-dual, we have
\begin{align*}
    d*_7d*_7 \psi = d*_7d\varphi &= d*_7 d \l dt^{123} - Q_{ik}dt^k \wedge \sigma_i\r\\ 
    &= d*_7 \l dt^k \wedge d\l Q_{ik} \sigma_i \r\r \\
    &= d \l *_3 dt^k \wedge *_4 d \l Q_{ik} \sigma_i\r\r \\
    &= d \l Q^{kj}\hat{dt^j} \wedge *_4 d \l Q_{ik} \sigma_i\r \r \\
    &=  \hat{dt^j} \wedge d \l Q^{kj} *_4 d *_4 \l Q_{ik} \sigma_i\r  \r \\
    &= - \hat{dt^j} \wedge d \l Q^{kj} d^* \l Q_{ik} \sigma_i\r  \r 
\end{align*}
where $\hat{dt^j}$ denotes the $2$-form on $\mathbb{T}^3$ that omits $dt^j$, for example $\hat{dt^1} = dt^2 \wedge dt^3$. We equate the two computations and obtain a flow of the hypersymplectic structure $\underline{\sigma}$. It is given by
\begin{equation*}
\begin{split}
    \frac{\partial \sigma_j}{\partial t} &=  d \l Q^{kj} d^* \l Q_{ik} \sigma_i\r \r \\
   &= d \l \l Q_{\sigma}\r_{jk} d^* \l \l Q_{\sigma}\r^{ki} \sigma_i\r \r.
\end{split}
\end{equation*}
Since $Q_{\underline{\omega}}^{-1} = Q_{\underline{\sigma}}$ this flow is precisely the hypersymplectic flow of Fine-Yao from \cite{fine_2018}.
\end{proof}

%--------------------------------------------------------------------------------------------------------------------------------            
% The following statement causes the title "References" to be used for the bibliography section:
\renewcommand*{\refname}{References}

\newpage 
\bibliography{main_bibliography.bib}

\cleardoublepage
\phantomsection		

%----------------------------------------------------------------------
\end{document}